\theoremstyle{plain}
\newtheorem{master}{Master}[section]
\newtheorem{thm}[master]{Theorem}
\newtheorem{fact}[master]{Fact}
\newtheorem{lem}[master]{Lemma}
\newtheorem{question}[master]{Question}
\newtheorem{problem}[master]{Problem}
\theoremstyle{definition}
\newtheorem{defin}[master]{Definition}
\newtheorem{observation}[master]{Observation}
\theoremstyle{remark}
\newtheorem{remark}[master]{Remark}
\numberwithin{equation}{section}
\newcommand{\Rea}{\mathbb{R}}
\newcommand{\Nat}{\mathbb{N}}
\newcommand{\Int}{\mathbb{Z}}
\newcommand{\Rat}{\mathbb{Q}}
\begin{document}
\title[Non-abelian group structure on the Urysohn space]{Non-abelian group structure on the Urysohn universal space}
\author[M. Doucha]{Michal Doucha}
\address{Institute of Mathematics\\ Polish Academy of Sciences\\
00-656 Warszawa, Poland}
\email{m.doucha@post.cz}

\date{}
\begin{abstract}
Following the continuing interest in the Urysohn space and, more specifically, the recent problem area of finding and comparing group structures on the Urysohn space we prove that there exists a non-abelian group structure on the Urysohn universal metric space. More precisely, we introduce a variant of the Graev metric that enables us to construct a free group with countably many generators equipped with a two-sided invariant metric that is isometric to the rational Urysohn space. We provide several open questions and problems related to this subject.
\end{abstract}
\keywords{Urysohn space, Graev metric, free groups, Fra\" iss\' e theory}
\subjclass[2010]{Primary 22A05; Secondary 54E50, 03C98}
\maketitle
\section*{Introduction}
There has been a lot of research recently connected to the Urysohn universal metric space. The space was constructed by P. Urysohn (\cite{Ur}) in 1920's but was forgotten for quite a long time. Nowadays, the Urysohn space, as well as the group of all its isometries, are a popular topic of mathematical research. A very interesting result was proved by P. Cameron and A. Vershik in \cite{CaVe} where they proved that there is an abelian (monothetic) group structure on the Urysohn space. Later, P. Niemiec in \cite{Nie1} proved that there is an abelian Boolean metric group that is isometric to the Urysohn space. And recently, Niemiec in \cite{Nie2} rediscovered the Shkarin's universal abelian Polish group (\cite{Sh}) and proved that this group is isometric to the Urysohn space as well (it is open though whether it differs from the group structures found by Cameron and Vershik). Niemiec also proved several negative results concerning group structures on the Urysohn space (we again refer to \cite{Nie2}), e.g. he proved there is no abelian metric group of exponent $3$ that is isometric to the Urysohn space (\cite{Nie2} Proposition 2.18). Let us also mention our previous work from \cite{Do} where we showed an existence of a metrically universal separable abelian metric group (answering an open question of Shkarin from \cite{Sh}) which turned out to be yet another different abelian group isometric to the Urysohn space. Vershik then asked (personal communication and \cite{Bonn}) whether there also exists a non-abelian group structure on the Urysohn space. We answer this question affirmatively here. Thus the following is the main result of this paper.
\begin{thm}\label{main}
There exists a free group $G$ of countably many generators equipped with a two-sided invariant metric that is isometric to the rational Urysohn space. In particular, there is a non-abelian group structure on the Urysohn space: the metric completion of $G$.
\end{thm}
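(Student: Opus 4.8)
The plan is to realize $G$ as a free group equipped with a suitable variant of the Graev metric and to force the resulting metric space to be the rational Urysohn space $\Ur_{\Rat}$ by a Fra\"iss\'e-style construction. Recall that $\Ur_{\Rat}$ is, up to isometry, the unique countable rational metric space with the one-point extension property: whenever $A$ is a finite subspace and we are given a rational-valued function on $A$ satisfying the triangle inequalities with the existing distances (a compatible one-point extension $A\cup\{b\}$), there is a point realizing it. I therefore want to build a countably generated free group carrying a two-sided invariant rational metric in which every such extension problem can be solved.

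First I would fix, for each pointed rational metric space $(X,d)$ with distinguished point taken as the identity $e$, a variant Graev norm on the free group $F(X)$, obtained by extending $d$ to $X\cup X^{-1}$, assigning to each word a matching cost, and taking the infimum over word representatives and matchings; the norm then induces a left (hence two-sided) invariant metric $\rho$. I would record the routine facts: $\rho$ is rational when $d$ is, $X$ embeds isometrically as the words of length one, and the construction is functorial, i.e.\ an isometric pointed embedding $X\hookrightarrow Y$ of generating sets induces an isometric embedding $F(X)\hookrightarrow F(Y)$, so that a chain of generating sets yields the direct limit $F(\bigcup_n X_n)=\bigcup_n F(X_n)$ with the corresponding direct-limit metric.

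The technical heart is a \emph{no-collapse} computation showing that $\rho$ assigns to each pair of reduced words exactly the value dictated by the matching of their letters, with no smaller value arising from a cleverer choice of non-reduced representatives or pairings. This is what makes the variant preferable to the classical Graev metric: it guarantees that distances between words are governed transparently by the distances among the generators they involve. Using two-sided invariance to translate any configuration so that the new point sits over the identity (writing $b=a_0 g$ for a base point $a_0\in A$, so that $\rho(b,a)=\|(a_0^{-1}a)^{-1}g\|$), I would then prove the key \emph{extension lemma}: given the finitely generated free group $F(X_n)$ and a compatible rational one-point extension of a finite subset $A\subseteq F(X_n)$, one can enlarge $X_n$ to $X_{n+1}$ by adjoining a single new free generator $g$, with its distances to the finitely many old generators chosen appropriately, so that the word $a_0 g\in F(X_{n+1})$ realizes the prescribed extension while all old distances are preserved. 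The delicate point, and the main obstacle, is exactly to choose the new generator's distances to the old generators so that the resulting matching-infimum distances from $a_0 g$ to the elements of $A$ come out equal to the prescribed values and not strictly smaller; the no-collapse lemma is what licenses this bookkeeping.

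Granting the extension lemma, I would finish by a standard Fra\"iss\'e enumeration. Arrange all finite one-point extension problems (over finite subsets of the growing group, with rational target distances) into a list meeting each cofinally often, and at stage $n$ solve the $n$-th pending problem by the extension lemma, adjoining one new generator to pass from $X_n$ to $X_{n+1}$. The direct limit $G=F\!\left(\bigcup_n X_n\right)$ is then a free group on the countably many adjoined generators, carrying a two-sided invariant rational metric, and by construction its underlying metric space is countable, has rational distances, and satisfies the one-point extension property; hence it is isometric to $\Ur_{\Rat}$. Finally, since $(G,\rho)$ is a topological group with a two-sided invariant metric, its metric completion is again a topological group, and this completion is isometric to the Urysohn space $\Ur$, giving the desired non-abelian group structure and proving Theorem~\ref{main}.
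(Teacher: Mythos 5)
Your overall architecture is the same as the paper's: an increasing union of finitely generated free groups carrying two-sided invariant rational metrics of Graev type, one new free generator adjoined at each stage to realize an enumerated rational Kat\v etov function, a book-keeping argument giving the one-point extension property, and passage to the metric completion. The gap is in your extension lemma, which is false as stated. You require the metric at each stage to be the Graev-type metric determined by the distances among the \emph{free generators}, and you allow yourself to choose only the distances from the new generator $g$ to the old generators. But then the distance from $g$, or from any translate $a_0g$, to a word of length at least two in the old generators is not freely prescribable: your own no-collapse lemma pins it down as a minimum of matching values, i.e.\ of sums of letter-to-letter distances. Concretely, start as the paper does with $F(\{x\})\cong\Int$ carrying the standard metric (the Graev metric with $d(x,e)=1$), take $A=\{e,x,x^2,x^3,x^4\}$ and $f\equiv 10$ on $A$, which is Kat\v etov. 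For any basepoint $a_0=x^j\in A$, the translated set $a_0^{-1}A$ contains $e$, $x^{\varepsilon}$ and $x^{2\varepsilon}$ for some $\varepsilon\in\{\pm1\}$, so realizing $f$ by $a_0g$ forces $\delta(g,e)=\delta(g,x^{\varepsilon})=\delta(g,x^{2\varepsilon})=10$. Writing $p=d(g,e)$, $q=d(g,x^{\varepsilon})$, $r=d(g,x^{-\varepsilon})$, the matching computation gives $\delta(g,e)=\min(p,q+1,r+1)$, $\delta(g,x^{\varepsilon})=\min(q,p+1,r+2)$ and $\delta(g,x^{2\varepsilon})=\min(q+1,p+2,r+3)$. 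The first two equalities force $p\geq 10$, $q\geq 10$, $r\geq 9$, whence $\delta(g,x^{2\varepsilon})\geq 11$: no choice of generator distances works, for any $a_0$. Since this $f$ occurs in your enumeration, your limit group fails the one-point extension property.

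The idea you are missing---and the actual content of the paper---is to decouple the generating set of the \emph{metric} from the free generating set of the \emph{group}. The paper's metrics are ``finitely generated'': the metric $d_n$ on $F_n$ is determined by its values on pairs from a finite symmetric set $A_n\subseteq F_n$ which need not consist of free generators. Before adjoining the new generator, the domain $B$ of the Kat\v etov function is absorbed into the metric's generating set (one extends $f$ to $A_n\cup B$, which is still a generating set for $d_n$), and then the new generator's distances are prescribed directly to \emph{all} elements of $A_n\cup B$; in the example above, $d'(g,x^2)=10$ becomes a prescribed value on a pair of the generating set rather than an infimum recomputed from single letters. Theorem \ref{reGraev}(2), proved via the match machinery of Lemmas \ref{lem1} and \ref{lem2}, is precisely the no-collapse statement that this prescription extends to a finitely generated (hence two-sided invariant), rational metric on $F_n\ast F_1\cong F_{n+1}$ which restricts to $d_n$ on $F_n$ and realizes $f$ by the new generator. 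That statement is where the real work lies, and it cannot be replaced by a no-collapse lemma for the plain Graev metric over the free generating set, as the counterexample shows.
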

The technical tool used for proving the theorem is an extension of two-sided invariant metric on a group to its free product with a free group. The referee of this paper pointed out to us the connection of this tool to the classical Graev metric introduced in \cite{Gr} and suggested to give it a more central role in the paper. We therefore state this tool in a general form, as suggested by the referee, as it might be of independent interest.
\begin{thm}\label{Graev}
Let $(G,d_G)$ be a group with two-sided invariant metric and let $(X,d_X)$ be a metric space. Suppose that $d'$ is a metric on the disjoint union $G\amalg X$ which extends both $d_G$ and $d_X$, and such that for every $x\in X$ we have $\inf \{d'(g,x):g\in G\}>0$ (equivalently, $G$ is closed in $G\amalg X$). Then $d'$ extends to the two-sided invariant metric $\delta$ on $G\ast F(X)$, where $F(X)$ is the free group with $X$ as a set of generators.
\end{thm}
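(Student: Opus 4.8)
The plan is to obtain $\delta$ from a conjugation-invariant length function $\ell$ on $H:=G\ast F(X)$, following the Graev recipe but with the identity $e_G\in G$ in the role of Graev's basepoint. First I would extend $d'$ to a symmetric function $\hat d$ on the set of letters $L:=G\cup X\cup X^{-1}$: on $G\cup X$ it is $d'$ (which already restricts to $d_G$ and is inverse-symmetric on $G$ by two-sided invariance); I make it inverse-symmetric in general, $\hat d(a^{-1},b^{-1})=\hat d(a,b)$, so that $\hat d(x^{-1},y^{-1})=d_X(x,y)$; and for two letters that can never coincide in $H$ ``for sign reasons'' I route the distance through the basepoint, e.g. $\hat d(x,y^{-1}):=d'(x,e_G)+d'(e_G,y)$. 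A short computation using the triangle inequality for $d'$ and bi-invariance of $d_G$ shows that $(L,\hat d)$ is a metric space.

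Next, writing an arbitrary $w\in H$ as a word $l_1\cdots l_{2k}$ in letters of $L$ (padding with copies of $e_G$ to even length), I set
\[
\ell(w)=\inf\ \sum_{\{i,j\}\in M}\hat d\bigl(l_i,l_j^{-1}\bigr),
\]
the infimum being over all words representing $w$ and all perfect matchings $M$ of $\{1,\dots,2k\}$ that are \emph{non-crossing when the positions are placed on a circle}, and I put $\delta(u,w):=\ell(u^{-1}w)$. The use of cyclic rather than linear non-crossing matchings is exactly what builds in conjugation invariance.

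I would then check that $\ell$ is a length function and that $\delta$ is a two-sided invariant pseudometric. Inverse-invariance $\ell(w^{-1})=\ell(w)$ follows by reversing and inverting the word, using symmetry of $\hat d$; subadditivity $\ell(w_1w_2)\le\ell(w_1)+\ell(w_2)$ follows by placing optimal matchings for $w_1$ and $w_2$ on complementary arcs, which preserves non-crossing; and conjugation invariance $\ell(uwu^{-1})=\ell(w)$ is immediate from the rotational symmetry of cyclic matchings, whence $\delta$ is left- and right-invariant. A one-pair matching gives $\delta\le d'$ on $G\amalg X$, and the same pairwise estimate shows that $\delta$ dominates every bi-invariant pseudometric $\delta'$ extending $d'$: peeling off an innermost adjacent matched pair and using $\delta'(l_i,l_{i+1}^{-1})\le\hat d(l_i,l_{i+1}^{-1})$ together with the triangle inequality gives $\delta'\le\delta$ by induction on word length. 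Thus $\delta$ is the maximal bi-invariant pseudometric that is bounded by $\hat d$ on letters.

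The hard part is the final step: proving that $\delta$ is a genuine metric and restricts back to $d'$ exactly, i.e. $\ell(w)>0$ for every $w\ne e$ and $\ell(g^{-1}x)=d'(g,x)$. This is precisely where the hypothesis that $G$ is closed in $G\amalg X$, equivalently $\inf\{d'(g,x):g\in G\}>0$, becomes indispensable: it prevents a free generator from being dissolved into $G$ through a long chain of individually cheap matchings. I would establish the lower bound by showing that any representing word carrying a non-crossing matching of total cost below the relevant gap is forced to match each $X$-letter with a formal inverse $X$-letter and to arrange the $G$-contributions so that the matching encodes an honest reduction of $w$ to the identity of $H$; as $w\ne e$ this is impossible, so $\ell(w)>0$. (Alternatively one constructs a single bi-invariant metric $\delta^{*}$ on $H$ extending $d'$ by hand and invokes maximality, $\delta\ge\delta^{*}$, to transfer both definiteness and the exact restriction to $\delta$.) Making the ``honest reduction'' argument precise, uniformly over all words and all nested matchings, is the step I expect to demand the most care.
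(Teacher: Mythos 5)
Your setup (a Graev-type length defined via non-crossing matchings, with $\delta$ characterized as the maximal bi-invariant pseudometric bounded by the letter distances) is the same skeleton as the paper's construction, but two things go wrong. First, your letter metric $\hat d$ is not a metric: routing the ``mixed-sign'' distances through the single point $e_G$, i.e.\ $\hat d(x,y^{-1}):=d'(x,e_G)+d'(e_G,y)$, violates the triangle inequality. Take $G=\Int$ with the usual metric and $X=\{x,y\}$ with $d'(x,n)=|n-100|+1$ and $d'(y,n)=|n+100|+1$ (and, say, $d'(x,y)=201$); then $\hat d(x,y^{-1})=202$, while $\hat d(x,100)+\hat d(100,y^{-1})=d'(x,100)+d'(-100,y)=2$. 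The correct definition, which is what the paper uses, amalgamates over \emph{all} of $G$: $d(a,b)=\inf\{d'(a,c)+d'(c^{-1},b^{-1}):c\in G\}$ for $a\in G\amalg X$, $b\in G\amalg X^{-1}$. This is not cosmetic: although the infimum over representing words in your $\ell$ would still recover the correct values of $\delta$ (insert $g^{-1}g$ and re-match), every subsequent manipulation of matchings --- including your own peeling/maximality step and any lower-bound analysis --- leans on the triangle inequality for the letter space.

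Second, and more seriously, the proposal stops exactly where the content of the theorem begins. Positivity ($\ell(w)>0$ for $w\neq e$) and the exact recovery $\delta\upharpoonright(G\amalg X)=d'$ are the whole point, and you defer both: the ``honest reduction'' argument is stated as a goal rather than proved, and the alternative you mention --- construct some bi-invariant metric $\delta^{*}$ extending $d'$ by hand and invoke maximality --- is circular, since the existence of such a metric is precisely what is being proved. In the paper this is where the work happens: one proves that every word representing an element of $G$ admits a match (Lemma \ref{lem1}), that in computing $\delta(g,1)$ the representative of $g$ may be taken irreducible (Lemma \ref{lem2}), and only then extracts a $w$-dependent positive lower bound. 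Note that this bound is a minimum of several quantities, not just the hypothesis gap $\varepsilon_0=\min_i\inf\{d(g,w_i):g\in G\}$ that you invoke: one also needs $\varepsilon_1=\min\{d(w_i,w_j^{-1}):w_i\neq w_j^{-1}\}$ over the $X$-letters of $w$ and $\varepsilon_2=\min\{d(w_j,1)\}$ over its $G$-letters. Your single gap cannot, for instance, bound $\delta(xy^{-1},1)$ (which requires $d_X(x,y)>0$) or $\delta(xgx^{-1},1)$ (which requires $d_G(g,1)>0$) away from zero. Until a matching analysis producing such bounds is carried out --- this is the bulk of the paper's proof --- what you have is a plausible setup, not a proof.
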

\begin{remark}
If $G=\{1\}$, then $\delta$ from the statement of Theorem \ref{Graev} corresponds to the standard Graev metric (from \cite{Gr}) on $F(X)$, with $1$ as a unit, constructed over the pointed space $X\amalg\{1\}$.
\end{remark}
We also refer the reader to \cite{Slu} where a variant of Graev metric on free products of groups having a common closed subgroup was defined.

The subject of the group structures on the Urysohn space is still far from being finished and there are several open questions provided at the end of the paper. Since most of the groups isometric to the Urysohn space are constructed via Fra\" iss\' e theory, we provide few questions related to Fra\" iss\' e classes of metric groups as well at the end of the paper.
\section{Preliminaries and definitions}
Recall that the Urysohn universal metric space is a Polish metric space that contains an isometric copy of every finite metric space and every partial isometry between two finite subsets extends to an autoisometry of the whole space. These properties characterize the Urysohn space uniquely up to isometry and moreover imply that it contains an isometric copy of every separable metric space.

The rational Urysohn space is a countable metric space with all distances rational that contains an isometric copy of every finite rational metric space and every partial isometry between two finite subsets extends to an autoisometry of the whole space. Again, it follows that such a space is unique up to isometry and contains an isometric copy of every countable rational metric space. Moreover, one can prove that the completion of the rational Urysohn space is the Urysohn space.

Let us also recall that a function $f:X\rightarrow \Rea^+_0$ is called Kat\v etov, where $(X,d)$ is some metric space, if for every $x,y\in X$ we have $|f(x)-f(y)|\leq d(x,y)\leq f(x)+f(y)$. One should think about the Kat\v etov function $f$ as about a function that prescribes distances from some, potentially new, point. We refer the reader to \cite{Kat} for more information about Kat\v etov functions and the construction of the Urysohn space using them.

The following well known fact characterizes the Urysohn and the rational Urysohn spaces.
\begin{fact}\label{char_Urys}
\begin{enumerate}
\item Let $(X,d)$ be a countable metric space with rational metric. Then it is isometric to the rational Urysohn space iff for every finite subset $A\subseteq X$ and for every rational Kat\v etov function $f:A\rightarrow \mathbb{Q}^+$ there exists $x\in X$ such that $\forall a\in A (d(a,x)=f(a))$.
\item Let $(X,d)$ be a Polish metric space. Then it is isometric to the Urysohn space iff for every finite subset $A\subseteq X$ and for every Kat\v etov function $f:A\rightarrow \mathbb{R}^+$ there exists $x\in X$ such that $\forall a\in A (d(a,x)=f(a))$.
\end{enumerate}
\end{fact}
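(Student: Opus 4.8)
The plan is to prove both directions of each biconditional, exploiting the bijective correspondence between a rational (resp.\ real) Katětov function $f$ on a finite set $A\subseteq X$ and a one-point metric extension $A\cup\{\ast\}$ with $d(a,\ast)=f(a)$: the Katětov inequalities $|f(a)-f(b)|\le d(a,b)\le f(a)+f(b)$ are precisely the conditions making this extension a genuine metric space. For the forward implication of (1), I would assume $X$ is the rational Urysohn space and fix a finite $A\subseteq X$ and a rational Katětov $f\colon A\to\mathbb{Q}^+$. Forming the finite rational metric space $A\cup\{\ast\}$, universality provides an isometric copy $A'\cup\{\ast'\}$ inside $X$; the map sending $A'$ back to $A$ is a partial isometry between finite subsets, which by homogeneity extends to an autoisometry $\varphi$ of $X$. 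Then $d(a,\varphi(\ast'))=f(a)$ for every $a\in A$, as required. The forward implication of (2) is verbatim the same argument with $\mathbb{Q}^+$ replaced by $\mathbb{R}^+$.

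For the backward implication of (1), I would recover the two defining properties of the rational Urysohn space from the one-point extension property. Universality for finite rational spaces follows by induction on the number of points: having isometrically placed $b_1,\dots,b_k$ as $x_1,\dots,x_k$, the assignment $x_i\mapsto d(b_i,b_{k+1})$ is Katětov (this is exactly the metric inequalities for $b_{k+1}$), so the extension property supplies an image for $b_{k+1}$. Homogeneity follows by a back-and-forth argument on a fixed enumeration of the countable space $X$: given a finite partial isometry, one alternately forces the next point of $X$ into the domain and into the range, each step realizing a single Katětov function, namely the distances from the new point transported through the current partial isometry. The union of the resulting chain is an autoisometry, and these two properties are precisely the characterization recalled above.

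The backward implication of (2) is where I expect the main difficulty, since $X$ is now uncountable and the naive point-by-point back-and-forth is unavailable. The induction above still shows $X$ contains every finite, and hence (by completeness and separability) every separable, metric space. For the homogeneity needed to identify $X$ with $\Ur$, I would instead run an approximate back-and-forth between countable dense subsets of $X$ and of $\Ur$: at each stage the extension property lets one realize the required distances up to a controlled error, and letting the errors decrease along the construction yields a bijection between the two dense sets that is distance-preserving in the limit and extends, by completeness, to an isometry $X\cong\Ur$. The delicate points are the bookkeeping of the approximation errors and the verification that the limiting map is a genuine isometry rather than a mere near-isometry; this is the step requiring the most care.
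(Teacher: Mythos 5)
The paper itself offers no proof of this statement---it is quoted as a ``well known fact''---so there is no internal argument to compare against. Judged on its own terms, your proposal follows the standard route, and part (1) (both directions) together with the forward direction of (2) are correct as outlined: the identification of Kat\v etov functions with one-point metric extensions, the use of universality plus homogeneity for the forward directions, and the countable back-and-forth for the backward direction of (1) are all exactly right.

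The weak point is the backward direction of (2), precisely the step you flag as the main difficulty. Your premise that ``the naive point-by-point back-and-forth is unavailable'' because $X$ is uncountable is mistaken: the back-and-forth is run not on all of $X$ but between countable dense subsets $D_X\subseteq X$ and $D_{\mathbb{U}}\subseteq \mathbb{U}$, and since the one-point extension property of both spaces is \emph{exact} and is only ever invoked over finite sets, every step can be carried out exactly, with no error control at all. Concretely, build sequences $(a_n)$ in $X$ and $(b_n)$ in $\mathbb{U}$ with $d(a_i,a_j)=d_{\mathbb{U}}(b_i,b_j)$ for all $i,j$: at a ``forth'' stage take the next point of $D_X$ as $a_{n+1}$ and realize the Kat\v etov function $b_i\mapsto d(a_i,a_{n+1})$ in $\mathbb{U}$; at a ``back'' stage take the next point of $D_{\mathbb{U}}$ as $b_{n+1}$ and realize the corresponding function in $X$. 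The map $a_n\mapsto b_n$ is then an exact isometry between a dense subset of $X$ and a dense subset of $\mathbb{U}$ (each merely \emph{contains} the chosen countable dense set; there is no need for the domain to equal $D_X$ or the range to equal $D_{\mathbb{U}}$), and by completeness it extends to an isometry $X\cong\mathbb{U}$. The approximate back-and-forth with shrinking errors that you propose---and leave unexecuted, acknowledging its delicacy---is the tool needed under the weaker hypothesis that Kat\v etov functions are realized only up to $\varepsilon$; under the exact hypothesis of the statement it is unnecessary. So, as written, your proof of (2) has a gap exactly where you predicted one; replacing that step by the exact dense back-and-forth closes it.
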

Later, when we construct the free group with two-sided invariant rational metric we check that it is isometric to the rational Urysohn space using the characterization from the previous fact.

We now define a special type of metric that is completely determined by its values on pairs from a finite set.
\begin{defin}
Let $(G,d)$ be a metric group. We say that the metric $d$ is \emph{finitely generated} if there exists a finite set $A_G\subseteq G$ (called generating set for $d$) such that $1\in A_G$, $A_G=A_G^{-1}$ and for every $a,b\in G$ we have $d(a,b)=\min\{d(a_1,b_1)+\ldots+d(a_n,b_n):n\in \Nat,\forall i\leq n (a_i,b_i\in A_G\wedge a=a_1\cdot\ldots\cdot a_n,b=b_1\cdot\ldots\cdot b_n)\}$. In particular, $G$ is (algebraically) generated by $A_G$.\\
\end{defin}
\begin{fact}\label{fact1}
If $d$ is a finitely generated metric on a group $G$, then $d$ is two-sided invariant.
\end{fact}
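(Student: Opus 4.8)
The plan is to verify the two defining identities of two-sided invariance separately, namely left-invariance $d(ca,cb)=d(a,b)$ and right-invariance $d(ac,bc)=d(a,b)$ for all $a,b,c\in G$, and for each of these to prove both inequalities by exploiting the explicit minimum formula defining $d$. First I would fix $c\in G$ and, using that $A_G$ generates $G$, write $c=c_1\cdots c_m$ with all $c_i\in A_G$; since $A_G=A_G^{-1}$, I also get $c^{-1}=c_m^{-1}\cdots c_1^{-1}$ as a product of elements of $A_G$. The presence of $1\in A_G$ guarantees that every element admits an $A_G$-decomposition of any sufficiently large length, so by padding with $1$'s the set over which the minimum is taken is always nonempty and one may assume two given elements are decomposed into the same number of factors.

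The key step is a concatenation (``padding'') trick. Given any decomposition $a=a_1\cdots a_n$ and $b=b_1\cdots b_n$ with $a_i,b_i\in A_G$, I would consider the decompositions $ca=c_1\cdots c_m a_1\cdots a_n$ and $cb=c_1\cdots c_m b_1\cdots b_n$, both into $m+n$ factors from $A_G$. Feeding these into the minimum formula for $d(ca,cb)$ and using $d(c_i,c_i)=0$ gives
\[
d(ca,cb)\le \sum_{i=1}^{m}d(c_i,c_i)+\sum_{j=1}^{n}d(a_j,b_j)=\sum_{j=1}^{n}d(a_j,b_j).
\]
Since this holds for every decomposition of the pair $(a,b)$, taking the minimum over all such decompositions on the right yields $d(ca,cb)\le d(a,b)$. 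Note that this argument never requires the minimum defining $d(a,b)$ to be attained: it suffices that the formula bounds $d(ca,cb)$ by each admissible cost.

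For the reverse inequality I would apply the same trick to $c^{-1}$ (which is legitimate precisely because $A_G=A_G^{-1}$) and to the pair $(ca,cb)$, obtaining $d(a,b)=d(c^{-1}(ca),c^{-1}(cb))\le d(ca,cb)$; combining the two gives left-invariance. Right-invariance is entirely symmetric, appending the decomposition of $c$ (resp.\ $c^{-1}$) on the right instead of on the left. I expect no serious obstacle here: the content is genuinely just the bookkeeping of decompositions, and the only points that must not be overlooked are that $A_G=A_G^{-1}$ is what lets us decompose $c^{-1}$, that $1\in A_G$ keeps the minimizing sets nonempty and of matching length, and that $d(c_i,c_i)=0$ makes the padding cost-free.
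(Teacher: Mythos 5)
Your proof is correct. The paper takes a shorter route: it recalls that a metric on a group is two-sided invariant if and only if $d(a\cdot b,c\cdot d)\leq d(a,c)+d(b,d)$ for all $a,b,c,d\in G$, and then observes that this single inequality is immediate from the defining minimum formula, by concatenating an admissible decomposition of the pair $(a,c)$ with one of the pair $(b,d)$. Your argument, in effect, inlines the proof of the nontrivial direction of that equivalence: you establish $d(ca,cb)\leq d(a,b)$ by padding with the zero-cost pairs $(c_i,c_i)$, and then obtain the reverse inequality by applying the same estimate to $c^{-1}$ and the pair $(ca,cb)$ --- exactly the cancellation trick one uses to deduce invariance from the subadditivity inequality. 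What the paper's route buys is brevity and modularity: the characterization is standard and reusable, and its concatenation argument never needs $A_G=A_G^{-1}$ or $1\in A_G$, since inverses enter only at the level of group elements, not of generators. What your route buys is self-containedness: you never need to state or prove the equivalence, and every step is an explicit manipulation of decompositions. Both arguments share the same combinatorial core, namely that concatenating admissible decompositions, with identical factors contributing zero, bounds the minimum defining $d$.
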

\begin{proof}
Recall (or it can be easily verified) that a metric $d$ on a group $G$ is two-sided invariant iff $\forall a,b,c,d\in G$ we have $d(a\cdot b,c\cdot d)\leq d(a,c)+d(b,d)$. It follows from the definition that finitely generated metrics have the latter property.
\end{proof}
Let us now present the observation of the referee which connects groups with finitely generated metric with free groups with Graev metric. Regarding Graev metric on a free group, we follow, and refer the reader to, Section 3 from \cite{DiGa}.
\begin{observation}
$G$ is a group with a finitely generated metric iff $G$ is a factor group of a free group of finitely many generators with the Graev metric with the factor metric.
\end{observation}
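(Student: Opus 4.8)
The plan is to recognize that the Graev metric on a free group of finitely many generators is itself a finitely generated metric in the sense of the Definition above, and that finitely generated metrics are preserved under passage to factor metrics; the Observation then follows from these two facts. Concretely, I would first record the explicit description of the Graev metric from Section 3 of \cite{DiGa}: if $Y$ is finite, $F(Y)$ is the free group on $Y$, and the generators carry a pointed metric with the identity $1$ as base point, then the Graev metric $\rho$ satisfies
\[
\rho(u,v)=\min\Bigl\{\textstyle\sum_{i=1}^n \bar d(p_i,q_i): u=p_1\cdots p_n,\ v=q_1\cdots q_n,\ p_i,q_i\in B\Bigr\},
\]
where $B=Y\cup Y^{-1}\cup\{1\}$ and $\bar d$ is the doubled seed metric on $B$ (with $\bar d(p,q^{-1})=\bar d(p,1)+\bar d(1,q)$ for mixed inverse pairs). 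In other words $\rho$ is exactly the finitely generated metric on $F(Y)$ with generating set $B$, hence two-sided invariant by Fact \ref{fact1}.

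For the implication that a factor group (with the factor metric) of such a free group carries a finitely generated metric, I would prove a stability lemma: \emph{if $\rho$ is a finitely generated metric on a group $F$ with generating set $B$ and $\pi\colon F\to G$ is a surjective homomorphism, then the factor metric $\tilde d$ on $G$ is finitely generated with generating set $\pi(B)$.} The set $\pi(B)$ is finite, symmetric, contains $1$, and generates $G$. For a pair $g,h\in G$, any decomposition over $\pi(B)$ has cost at least $\tilde d(g,h)$ by subadditivity of the (two-sided invariant) factor metric; conversely, choosing $u,v\in F$ with $\pi(u)=g$, $\pi(v)=h$ and $\rho(u,v)$ nearly minimal and pushing an optimal $B$-decomposition of $u,v$ forward under $\pi$ yields a $\pi(B)$-decomposition of $g,h$ of cost at most $\rho(u,v)$, since $\pi$ is $1$-Lipschitz for the factor metric. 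These two bounds match; the only delicate point is that the resulting infimum is attained, which is routine and automatic in the rational setting of this paper (all metrics take values in a fixed $\frac1N\Int$). Combined with the identification above, this gives the ``factor $\Rightarrow$ finitely generated'' direction.

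For the converse, suppose $(G,d)$ carries a finitely generated metric with generating set $A_G$. I would set $Y=A_G\setminus\{1\}$, let $F=F(Y)$, equip its generators with the seed $\bar d(x_a,x_b)=d(a,b)$ and $\bar d(x_a,1)=d(a,1)$, form the Graev metric $\rho$, and take the natural surjection $\pi\colon F\to G$, $x_a\mapsto a$. The claim is that the factor metric $\tilde d$ equals $d$. The inequality $\tilde d\le d$ follows by lifting an optimal $A_G$-decomposition $g=\prod a_i$, $h=\prod b_i$ letter by letter to the \emph{positive} generators $x_{a_i},x_{b_i}$ and invoking subadditivity together with $\rho(x_a,x_b)=d(a,b)$. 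The inequality $\tilde d\ge d$ follows from checking that $\pi$ is $1$-Lipschitz from $(F,\rho)$ to $(G,d)$: on generators one has $d(\pi p,\pi q)\le\bar d(p,q)$, and this propagates to all of $F$ via the decomposition formula for $\rho$ and the subadditivity of $d$.

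The main obstacle, and the key idea, lies in this converse. The naive choice of a free group on a transversal of $A_G$ fails, because the Graev metric routes a mixed pair $x_a,x_b^{-1}$ through the identity, so a single-letter lift of $(a,b^{-1})\in A_G\times A_G$ would be charged $d(a,1)+d(1,b)$ rather than the possibly smaller genuine distance $d(a,b^{-1})$, breaking $\tilde d\le d$. Doubling the generators, i.e.\ taking $F$ free on the whole symmetric set $A_G\setminus\{1\}$ so that $a^{-1}\in A_G$ lifts to its own generator $x_{a^{-1}}$ rather than to the formal inverse $x_a^{-1}$, records all true distances $d(a,b)$ on the seed simultaneously and repairs this. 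Dually, the consistency of the $1$-Lipschitz estimate rests precisely on the triangle inequality $d(a,b^{-1})\le d(a,1)+d(1,b^{-1})$ in $G$, which guarantees that the through-the-identity routing of the Graev metric never undercuts a genuine $G$-distance. Once both points are in place the two implications combine to yield the Observation.
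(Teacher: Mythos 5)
Your proposal is correct and follows essentially the same route as the paper: the forward direction uses the identical construction (the free group $F(A_G\setminus\{1\})$ on the \emph{whole} symmetric set of generators, the seed metric inherited from $d$, and the $1$-Lipschitz natural surjection), while your ``stability lemma'' is precisely the general form of the paper's claim that the factor metric on $F(X)/H$ is finitely generated with generating set $\{[a]_H: a\in X\amalg X^{-1}\amalg\{1\}\}$, made applicable by first noting that the Graev metric is itself finitely generated over $X\amalg X^{-1}\amalg\{1\}$. The only small caveat is your justification of attainment of the minimum via rationality: the Observation is not restricted to rational metrics, but attainment is still automatic since sums bounded by any fixed constant use boundedly many positive terms drawn from a finite set of values.
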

To see this, suppose that the finitely generated metric on $G$ is generated by a finite set $A_G\subseteq G$. Consider the free group $F(A_G\setminus \{1\})$, with $A_G\setminus \{1\}$ as a set of free generators and $1\in A_G$ as a unit, with the Graev metric. Since $A_G$ also (algebraically) generates $G$ there is a natural homomorphism from $F(A_G\setminus \{1\})$ onto $G$. It follows from the definitions of the respective metrics that this homomorphism is $1$-Lipschitz and that the distance between two elements of $G$ is equal to the infimum distance between the corresponding classes in $F(A_G\setminus \{1\})$.

Conversely, if $F(X)$ is a free group with the Graev metric constructed over a finite pointed metric space $X\amalg \{1\}$, $H\leq F(X)$ is a closed subgroup, then the factor metric on $F(X)/H$ is finitely generated. One can check that the generating set for the factor metric is $\{[a]_H: a\in X\amalg X^{-1}\amalg \{1\}\}$.
\section{Proofs of the main theorems}
Having defined finitely generated metric, we restate Theorem \ref{Graev} here adding a special subcase when $d_G$ is finitely generated and $X$ is finite.
\begin{thm}\label{reGraev}
Let $(G,d_G)$ be a group with two-sided invariant metric and let $(X,d_X)$ be a metric space.
\begin{enumerate}
\item  Suppose that $d'$ is a metric on the disjoint union $G\amalg X$ which extends both $d_G$ and $d_X$, and such that for every $x\in X$ we have $\inf \{d'(g,x):g\in G\}>0$ (equivalently, $G$ is closed in $G\amalg X$). Then $d'$ extends to the two-sided invariant metric $\delta$ on $G\ast F(X)$, where $F(X)$ is the free group with $X$ as a set of generators.
\item If $d_G$ is finitely generated by values on pairs of some finite $A_G\subseteq G$, $X$ is finite and $d'$ is a metric on the disjoint union $A_G\amalg X$ which extends both $d_G\upharpoonright A_G$ and $d_X$, then $d'$ extends to the finitely generated metric $\delta$ on $G\ast F(X)$ such that $\delta\upharpoonright G=d_G$.

\end{enumerate}
\end{thm}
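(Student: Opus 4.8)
The plan is to reduce the construction of the two-sided invariant metric $\delta$ to that of a suitable conjugation-invariant length function (a norm) $N$ on $G\ast F(X)$, and then to set $\delta(u,v)=N(u^{-1}v)$. A metric of this form is automatically left-invariant, and it is right-invariant exactly when $N$ is conjugation-invariant; so it suffices to produce $N\colon G\ast F(X)\to\Rea^+_0$ satisfying $N(w)=0\iff w=1$, $N(w)=N(w^{-1})$, $N(wv)\le N(w)+N(v)$ and $N(uwu^{-1})=N(w)$. The requirement that $\delta$ extend $d'$ then unwinds to the three conditions $N(g^{-1}g')=d_G(g,g')$, $N(x^{-1}x')=d_X(x,x')$ and $N(g^{-1}x)=d'(g,x)$; in particular $\delta\upharpoonright G=d_G$.

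To define $N$ I would follow the Graev recipe of Section~3 of \cite{DiGa}, adapted to the free product. First extend $d'$ to a symmetric function $\rho$ on the alphabet $\Sigma=G\cup X\cup X^{-1}$: on pairs of the same ``sign'' $\rho$ is forced by $d'$ together with the rule $\rho(\bar a,\bar b)=\rho(a,b)$, where $\bar{\;\cdot\;}$ denotes the group inverse (so e.g. $\rho(g,x^{-1})=\rho(g^{-1},x)=d'(g^{-1},x)$), and on a mixed pair of free generators I would route through the group, setting $\rho(x,y^{-1})=\inf_{g\in G}\{d'(x,g)+d'(g^{-1},y)\}$. For a word $W=a_1\cdots a_{2m}$ over $\Sigma$ (of even length, padding by $1$ allowed) and a non-crossing perfect matching $\mu$ of its positions, let $\mathrm{cost}(W,\mu)=\sum_{\{i,j\}\in\mu}\rho(a_i,\overline{a_j})$, and put $N(w)=\inf\mathrm{cost}(W,\mu)$ over all words $W$ representing $w$ in $G\ast F(X)$ and all non-crossing matchings $\mu$. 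The non-crossing condition mirrors genuine free reduction (cancel innermost matched pairs first), and allowing arbitrary representing words lets a matching route cancellations through any element of $G$.

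The triangle inequality follows by concatenating a representing word and matching for $w$ with one for $v$; symmetry follows by reversing the word and applying $\bar{\;\cdot\;}$, under which costs are preserved; and conjugation invariance follows since a matching for $w$ extends to one for $uwu^{-1}$ by nesting the letters of $u$ against those of $u^{-1}$, each such pair contributing $\rho(b,\overline{\bar b})=\rho(b,b)=0$. The inequalities $N(g^{-1}g')\le d_G(g,g')$, $N(x^{-1}x')\le d_X(x,x')$ and $N(g^{-1}x)\le d'(g,x)$ are witnessed by the single-pair matching of the length-two word, using $\rho(\bar a,\bar b)=\rho(a,b)$. The crux, and the step I expect to be the main obstacle, is the matching \emph{lower bound}, which simultaneously yields positivity ($N(w)>0$ for $w\ne 1$) and the reverse inequalities needed for $\delta$ to extend $d'$ and to satisfy $\delta\upharpoonright G=d_G$: one must show no choice of representing word and non-crossing matching can beat the obvious cost. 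The hypothesis $\inf_{g\in G}d'(g,x)>0$ is exactly what prevents a nontrivial reduced word from being cancelled for free by pushing generators into $G$, and the non-crossing structure permits an induction on word length, peeling off an innermost matched pair and invoking the triangle-type inequalities for $\rho$ together with the genuine metric $d_G$; for a word lying entirely in $G$ the estimate must collapse to $d_G$ itself.

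For part (2), with $d_G$ finitely generated by $A_G$ and $X$ finite, I would first extend the given $d'$ from $A_G\amalg X$ to all of $G\amalg X$ by $d'(g,x)=\min_{a\in A_G}\{d_G(g,a)+d'(a,x)\}$; this extends $d_G$ and $d_X$, and the closedness hypothesis $\inf_{g}d'(g,x)>0$ holds automatically since $A_G$ is finite and $d'(a,x)>0$ for each $a\in A_G$. Applying part (1) produces the two-sided invariant $\delta$ with $\delta\upharpoonright G=d_G$, so it only remains to check that $\delta$ is finitely generated by $A_G\cup X\cup X^{-1}$. For this I would show that any representing word and non-crossing matching computing $N$ can be refined, without increasing cost, to one whose letters all lie in $A_G\cup X\cup X^{-1}$: a matched pair of $G$-letters contributing $\rho(g,\overline{g'})=d_G(g,g'^{-1})$ is split, using the finitely generated formula for $d_G$, into a \emph{nested} (hence non-crossing) block of matched $A_G$-pairs of the same total cost, while the routing of generator cancellations goes through $1\in A_G$. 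This refinement is precisely the assertion that $\delta$ obeys the finitely generated min-formula with generating set $A_G\cup X\cup X^{-1}$.
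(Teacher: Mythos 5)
Your reduction to a conjugation-invariant norm and your matching-based definition of $N$ are a legitimate reformulation of the construction (essentially the Ding--Gao/Slutsky picture of Graev-type metrics, whereas the paper compares two equal-length words letter by letter via $\rho(u^*,v^*)$ and uses matches only inside its lemmas); the properties you verify --- symmetry, subadditivity, conjugation invariance, and the upper bounds witnessed by single-pair matchings --- are correct, but they are the routine half of the argument. The genuine gap is the step you yourself flag as ``the main obstacle'': the lower bound showing that no representing word together with a non-crossing matching can beat the obvious cost. This is not a deferred detail; it is simultaneously the proof that $N(w)>0$ for $w\neq 1$ (so that $\delta$ is a metric rather than a pseudometric), that $\delta$ extends $d'$, and that $\delta\upharpoonright G=d_G$, and it is where the paper's entire technical content lives: Lemma \ref{lem1} (every word representing an element of $G$ admits a match), Lemma \ref{lem2} (in computing $\delta(g,1)$ one may take the word representing $g$ to be irreducible, proved by a delicate modification of the comparison word using a match), and then the positivity argument via the constants $\varepsilon_0,\varepsilon_1,\varepsilon_2$, which is exactly where the hypothesis $\inf\{d'(g,x):g\in G\}>0$ enters. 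Your one-sentence hint (``peel off an innermost matched pair and induct on word length'') does not obviously close this: the induction must yield a bound uniform over representing words of unbounded length, and it must handle the interaction between matched $G$-letters --- where cancellation is not pairwise but governed by $d_G$ and its bi-invariance --- and matched generator letters; making this precise is what the case analysis in the paper's proof of Lemma \ref{lem2} and the subsequent $\varepsilon$-estimates accomplish. As written, your proposal establishes only that $\delta$ is a two-sided invariant pseudometric dominated by $d'$ on the alphabet.

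For part (2) your outline is essentially the paper's: extend $d'$ over $G$ by the amalgam formula over $A_G$, and convert a matched $G$-pair of cost $d_G(g,g'^{-1})$ into a nested block of matched $A_G$-pairs of equal total cost (using $A_G=A_G^{-1}$), which mirrors the paper's proof that the finitely generated metric $\gamma$ coincides with $\delta$. But this part presupposes the conclusions of part (1), in particular $\delta\upharpoonright G=d_G$, so the gap above propagates and the proposal as a whole is incomplete.
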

At first, we show how to deduce Theorem \ref{main} from Theorem \ref{reGraev}. For every $m\in \Nat$, by $F_m$ we shall denote the free group of $m$ generators.

Suppose that $f$ is a rational Kat\v etov function defined on some finite set $B$, where $B\subseteq F_m$ and $F_m$ is equipped with a finitely generated rational metric. Call such a Kat\v etov function \emph{relevant}. Since there are only countably many finitely generated rational metrics on free groups of finitely many generators, it follows that there are only countably many relevant Kat\v etov functions.  So let $(f_n)_{n\in \mathbb{N}}$ be an enumeration of all relevant Kat\v etov functions with infinite repetition.

We construct the group $G$ inductively as a direct limit of free groups of finitely many generators equipped with a (two-sided invariant) finitely generated rational metric which is a variant of the Graev metric (defined below). A construction using a direct limit of groups equipped with the Graev metric was first used in \cite{DrGa} to produce a Polish group in which Lie sums and Lie brackets do not exist.

At step $1$, we set $F_1\cong \Int$ to be the integers with the standard Euclidean metric $d_1$, which is clearly finitely generated and rational.

Suppose we have constructed the free group $F_n$ with finitely generated rational metric $d_n$ which is generated by values on pairs of some finite set $A_n\subseteq F_n$. Consider the relevant Kat\v etov function $f_n$. We have that $f_n:B\subseteq (F_m,p)\rightarrow \Rat^+$, for some $m\in \Int$, finitely generated rational metric $p$ and finite set $B$. Suppose that $m\leq n$ and $(F_m,p)$ is isometrically isomorphic to $(F_m,d_n\upharpoonright F_m)$, where $F_m$ is naturally identified with the free subgroup of $F_n$ generated by the first $m$ generators. Then we can actually view the function $f_n$ as defined on some finite set $B\subseteq F_n$. Without loss of generality, we may suppose that $A_n$, the generating set for $d_n$, is equal to $B$. Indeed, we could extend $f_n$ to $A_n\cup B$ and then $A_n\cup B$ would still be generating set for $d_n$.

Using $(2)$ of Theorem \ref{reGraev} with $X$ as a one-point space $\{x\}$ and $d'(x,g)=f(g)$, for every $g\in B=A_n$, we extend the metric $d_n$ to a (finitely generated rational) metric $d_{n+1}$ on $F_n\ast F_1\cong F_{n+1}$ such that the Kat\v etov function $f_n$ is realized by the new added generator.

If, on the other hand, we have that either $m>n$ or $m\leq n$ but  $(F_m,p)$ is \emph{not} isometrically isomorphic to $(F_m,d_n\upharpoonright F_m)$, then we extend $(F_n,d_n)$ to $(F_{n+1},d_{n+1})$ arbitrarily (just ensuring that $d_{n+1}$ is still finitely generated and rational).\\

When the inductive construction is finished, we have a free group with countably many generators, denoted by $G$, equipped with some two-sided invariant rational metric $d$. It follows that the group operations on $G$ are continuous with respect to the topology induced by the metric. To see this, just observe that by invariance for any $g,h\in G$ we have 
\begin{equation}\label{inverse}
d(g,h)=d(g^{-1},h^{-1})
\end{equation}
so the operation inverse is continuous (an isometry), and for any $g_1,g_2,\\h_1,h_2\in G$ we have 
\begin{equation}\label{addition}
d(g_1\cdot h_1,g_2\cdot h_2)\leq d(g_1,g_2)+d(h_1,h_2)
\end{equation}
by invariance and triangle inequality.

Consider now the metric completion, denoted $\mathbb{G}$, of $G$. It is a separable complete metric space and the group operations extend to the completion. Indeed, the inverse operation extends because it is an isometry (\ref{inverse}) and the group multiplication extends because if $(g_n)_n,(h_n)_n\subseteq G$ are two Cauchy sequences then $(g_n\cdot h_n)_n$ is a Cauchy sequence as well (\ref{addition}). It follows that $\mathbb{G}$ is a Polish group equipped with a two-sided invariant metric. We refer the reader to \cite{Gao} for an exposition on Polish (metric) groups.\\

We claim that $G$ is isometric to the rational Urysohn space. It suffices to check the condition from Fact \ref{char_Urys} (1). So let $f:A\subseteq G\rightarrow \Rat^+$ be an arbitrary rational Kat\v etov function defined on a finite subset $A$. Then there exists some $m\in \Int$ such that $A\subseteq F_m$ and there are infinitely many $n$'s such that $f$ corresponds to $f_n$. Choose one such $n$ that is greater than $m$. However, then during the $n$-th step of the induction we guaranteed that $f_n$, and thus $f$, was realized in $F_{n+1}$.

Thus the rest of the section is devoted to prove Theorem \ref{reGraev}. We first prove $(1)$ of Theorem \ref{reGraev} and then show how the item $(2)$ follows.\\

The reader is invited to compare the tools of the proof with those in Section 3 of \cite{DiGa}. Let $X^{-1}$ be a disjoint copy of $X$, considered as a set of formal inverses of elements of $X$. For every $x\in X^i$, $i\in \{-1,1\}$, $x^{-1}$ denotes the corresponding element in $X^{-i}$. We extend $d'$ on $G\amalg X\amalg X^{-1}$, denoted by $d$,  so that:
\begin{itemize}
\item For every $a,b\in G\amalg X^{-1}$ we have $d(a,b)=d'(a^{-1},b^{-1})$.
\item For every $a\in G\amalg X$ and $b\in G\amalg X^{-1}$ we have $d(a,b)=\inf \{d'(a,c)+d'(c^{-1},b^{-1}):c\in G\}$.

\end{itemize}
In other words, at first we define the distances between elements of $G\amalg X^{-1}$ so that the bijection between $G\amalg X$ and $G\amalg X^{-1}$ that takes $a\to a^{-1}$, for every $a\in G\amalg X$, is an isometry. Then we take the (greatest) metric amalgamation of $G\amalg X$ and $G\amalg X^{-1}$ over $G$.

Denote now $G\amalg X\amalg X^{-1}$ by $S$, and let $W(S)$ be the set of all words over $S$ considered as an alphabet. 
\begin{defin}\label{irrdef}
A word $w=w_1\ldots w_n\in W(S)$, where $w_i\in S$ for $i\leq n$, is called \emph{irreducible} if for no $i<n$ we have $w_i,w_{i+1}\in G$ or $w_i=w_{i+1}^{-1}$.
\end{defin}
For every $w\in W(S)$ we shall denote by $w'$ the corresponding element in $G\ast F(X)$. Note that the mapping $w\to w'$ is a bijection between irreducible words from $W(S)$ and elements of $G\ast F(X)$.

For any $w\in W(S)$, let $|w|$ denote its length. If $v,w\in W(S)$ are two words of the same length $n$, then we define the pre-distance between them as $$\rho(v,w)=d(v_1,w_1)+\ldots+d(v_n,w_n)$$
Finally, we define the \emph{Graev} metric $\delta$ on $G\ast F(X)$ as
$$\delta(u,v)=\inf \{\rho(u^*,v^*):u^*,v^*\in W(S),|u^*|=|v^*|, (u^*)'=u,(v^*)'=v\}$$ for any $u,v\in G\ast F(X)$. It is easy to check that $\delta$ is symmetric and that for any $u,v,w,x\in G\ast F(X)$ we have $\delta(u\cdot v,w\cdot x)\leq \delta(u,w)+\delta(v,x)$. The latter property also implies two-sided invariance and the triangle inequality. We need to check that it is indeed a metric, i.e. $\delta(u,v)>0$ if $u\neq v$, and that it extends $d'$.

We need the following definition. Compare it with Definition 3.3 in \cite{DiGa}.
\begin{defin}[Match]\label{definematch}
Let $w\in W(S)$ be a word of length $n$. Let $P\subseteq \{1,\ldots,n\}$ be the subset such that for every $i\leq n$ we have $i\in P$ iff $w_i\in X\amalg X^{-1}$. We call a function $\theta:P\rightarrow P$ a match for $w$ if
\begin{itemize}
\item for every $i\in P$, we have $\theta(\theta(i))=i$
\item for every $i\in P$, we have $w_i=w_{\theta(i)}^{-1}$
\item for every $i\in P$, assuming without loss of generality that $i<\theta(i)$, we have $$\prod_{i\leq j\leq \theta(i)} w'_j=1$$
\end{itemize}
\end{defin}
\begin{lem}\label{lem1}
Let $w\in W(S)$ be such that $w'\in G$. Then there exists a match for $w$.
\end{lem}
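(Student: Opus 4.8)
The plan is to induct on the number of letters of $w$ that lie in $X\amalg X^{-1}$, i.e.\ on $|P|$, peeling off one matched pair at a time. The base case $P=\emptyset$ is immediate, since the empty function is a match. For the inductive step the heart of the matter is to locate a single \emph{innermost} pair $i<j$ in $P$ that is \emph{good}, by which I mean that $i$ and $j$ are consecutive elements of $P$ (no letter of $X\amalg X^{-1}$ lies strictly between them), that $w_j=w_i^{-1}$, and that $\prod_{i<l<j}w'_l=1$. Once such a pair is found I would set $\theta(i)=j$, delete the two letters $w_i,w_j$ from $w$ while keeping the (purely $G$-valued) letters between them, and recurse on the resulting shorter word $\tilde w$.

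To produce the good pair I would pass to the normal form of the free product $G\ast F(X)$. Let $\pi\colon G\ast F(X)\to F(X)$ be the homomorphism that is the identity on $X$ and kills $G$. Since $w'\in G$ we have $\pi(w')=1$, so the subword of $w$ consisting of the $X\amalg X^{-1}$-letters, read in order, represents $1$ in $F(X)$; in particular at least one generator must be cancelled when $w$ is reduced in $G\ast F(X)$. I would reduce $w_1\cdots w_n$ letter by letter, keeping each partial product in reduced form, and consider the \emph{first} step at which a letter of $X\amalg X^{-1}$ is cancelled, say when $w_j$ annihilates the currently trailing generator, which stems from some earlier position $i$. Because no generator has cancelled before step $j$, the trailing generator is exactly the last $X\amalg X^{-1}$-letter preceding $j$, so $i$ and $j$ are consecutive in $P$; cancellation forces $w_j=w_i^{-1}$; and cancellation also forces the block of $G$-letters between $i$ and $j$ to have collapsed to the identity, i.e.\ $\prod_{i<l<j}w'_l=1$. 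This is precisely a good pair, and then $\prod_{i\le l\le j}w'_l=w'_i\cdot 1\cdot (w'_i)^{-1}=1$.

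With the good pair in hand, deleting $w_i$ and $w_j$ (and retaining the intermediate $G$-letters, whose product is $1$) yields $\tilde w$ with $\tilde w'=w'\in G$ and $|\tilde P|=|P|-2$; the inductive hypothesis supplies a match $\tilde\theta$ for $\tilde w$, and adjoining the pair $\{i,j\}$ gives a match for $w$. The conditions of Definition~\ref{definematch} transfer back because the deleted block occupies consecutive positions, contains no surviving generator in its interior, and has trivial product, so reinserting it leaves every product $\prod_{p\le l\le q}w'_l$ attached to an inherited pair unchanged: each such interval either contains the whole block or is disjoint from it.

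The step I expect to be the main obstacle is the justification that the first cancelled generator cancels against an \emph{adjacent} partner with \emph{trivial} $G$-content between them. The naive temptation is to read the match directly off the non-crossing cancellation pattern of $\pi(w')$ in the free group, but this is wrong: that pattern can pair two generators which are adjacent in the generator-subword yet enclose a nontrivial element of $G$, and such a pair is not good (for instance $x\,g\,x^{-1}x\,g^{-1}x^{-1}$ with $g\neq 1$ has $w'=1$, yet its two $\pi$-matched pairs each trap a nontrivial element of $G$, while the genuine good pairs are the nested ones). What rescues the argument is working inside $G\ast F(X)$ rather than $F(X)$: one must verify that reduction in the free product never lets a single $G$-letter trigger a generator cancellation (a trailing $G$-syllable collapsing to $1$ exposes a generator but, by reducedness of the partial product, never an immediately cancelling one), so that the very first generator cancellation is forced to be between consecutive positions of $P$ with trivial $G$ in between. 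Making this normal-form bookkeeping precise is the technical core of the proof.
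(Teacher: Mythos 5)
Your proof is correct, but it builds the match by a genuinely different decomposition than the paper. Both arguments induct on $|P|$, yet the paper works top-down: it first reduces to what it calls cancelling $X$-sequences (segments beginning and ending with mutually inverse generators, with trivial total product and no trivial proper prefix), then splits such a sequence at the first generator position where the prefix product returns to $1$, recursing on the inner segment and on the outer remainder; in that scheme the matched pairs arise as the endpoints of nested cancelling sequences, and the existence of the splitting index together with the fact $w_{i_S}=w_{i_F}^{-1}$ is simply asserted (it is an implicit appeal to the normal form in $G\ast F(X)$). You instead work bottom-up: you locate one innermost pair --- consecutive in $P$, mutually inverse, with the intermediate $G$-letters multiplying to $1$ --- as the first cancellation in a left-to-right reduction in the free product, delete it, and recurse on the whole remaining word. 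This obliges you to check that the match conditions survive re-insertion of the deleted block, which you do correctly: since the pair is consecutive in $P$, any inherited interval either contains the block (whose product is $1$) or misses it. What your route buys is precisely the bookkeeping the paper leaves tacit: the left-to-right reduction invariant (before the first cancellation, the partial product's normal form lists the original generator letters in order with merged $G$-syllables between them) is an explicit substitute for the paper's unproved existence claims, and your warning example $x\,g\,x^{-1}x\,g^{-1}x^{-1}$ correctly isolates why one cannot read the match off the cancellation pattern of the image in $F(X)$ alone --- a pitfall the paper's argument also implicitly avoids but never names. The paper's version, in exchange, needs no re-insertion argument, since its recursions run over (concatenations of) contiguous subwords whose interval products are unchanged.
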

\emph{Proof of Lemma \ref{lem1}}. Let $n=|w|$. First of all, we suppose that there exists $i\leq n$ such that $w_i\in X\amalg X^{-1}$. Otherwise, there is nothing to prove.

Next we claim that it suffices to prove the lemma for the case when $w_1\in X\amalg X^{-1}$ and $w_n=w_1^{-1}$, $w'=1$ and for no $i<n$ we have $(w_1\ldots w_i)'=1$. Let us call such a sequence $w_1,\ldots,w_n$ a cancelling $X$-sequence of length $|P|$, where again $P=\{i\leq n: w_i\in X\amalg X^{-1}\}$. Indeed, suppose the lemma is proved for such a case. Consider the first index $1\leq i_S<n$ such that $w_{i_S}\in X\amalg X^{-1}$. Since $w'\in G$ there must exist index $i_S<i\leq n$ such that $(w_{i_S}\ldots w_i)'=1$. Let $i_F$ be the least such index. Clearly, $w_{i_S}=w_{i_F}^{-1}$. By assumption, we can find an appropriate match $\theta_1$ for the subword $w_{i_S}\ldots w_{i_F}$. Then we look for the least index $i_F<i'_S$, if there is any, such that $w_{i'_S}\in X\amalg X^{-1}$. Again, we can find an appropriate $i'_S<i'_F\leq n$ and then find a mathc $\theta_2$ for the subword $w_{i'_S}\ldots w_{i'_F}$. At the end, we can take as the desired $\theta$ the union $\theta_1\cup\theta_2\cup\ldots$ of all matches obtained in that way.

We now prove the lemma (with the assumption that $w_1,\ldots,w_n$ is a cancelling $X$-sequence) by induction on $|P|$. If $|P|=2$ then, clearly, we may put $\theta(1)=n$ and $\theta(n)=1$ and we are done.

Suppose now that $|P|=m>2$ and the lemma has been proved for all (even) $l<m$. Suppose that $P=\{k_1=1,\ldots,k_m=n\}\subseteq \{1,\ldots,n\}$. Since $(w_1\ldots w_n)'=1$, $w_1=w_n^{-1}$, we have $(w_2\ldots w_{n-1})'=1$. Thus there must exist $2<i<n$ such that $(w_{k_2}\ldots w_{k_i})'=1$. Suppose also that $i$ is the least index with such a property. Then both $w_{k_2},\ldots,w_{k_i}$ and $w_1,\ldots,w_{k_2-1},\\w_{k_i+1},\ldots,w_n$ are cancelling $X$-sequences of length less than $m$. By induction hypothesis, we can find corresponding $\theta_1:\{k_2,\ldots,k_i\}\rightarrow \{k_2,\ldots,k_i\}$ and $\theta_2: \{k_1,k_{i+1},\ldots,k_m\}\rightarrow \{k_1,k_{i+1},\ldots,k_m\}$. We then set $\theta=\theta_1\cup \theta_2$ and we are done.
\hfill $\qed$ (of Lemma \ref{lem1})\\
\begin{lem}\label{lem2}
Let $g\in G\ast F(X)$. Then $\delta(g,1)=\inf \{\rho(w,u):w,u\in W(S),|w|=|u|,w'=g,u'=1,w\text{ is irreducible}\}$.
\end{lem}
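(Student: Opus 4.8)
\emph{Proof of Lemma \ref{lem2} (plan).} The inequality $\ge$ is immediate: the right-hand side is an infimum of $\rho$ over a subset of the representations allowed in the definition of $\delta(g,1)$ --- namely those in which the representative of $g$ is additionally required to be irreducible --- so it can only be larger. The whole content is the reverse inequality, which I would phrase as a transport statement: given any $w,u\in W(S)$ with $w'=g$, $u'=1$ and $|w|=|u|$, one can produce $\bar w,\bar u\in W(S)$ with $\bar w$ irreducible, $\bar w'=g$, $\bar u'=1$, $|\bar w|=|\bar u|$ and $\rho(\bar w,\bar u)\le\rho(w,u)$. Since the irreducible representative of $g$ is unique, taking the infimum over all pairs $(w,u)$ then gives that the right-hand side is $\le\delta(g,1)$, as desired.

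I would prove the transport statement by induction on $|w|$. If $w$ is already irreducible there is nothing to do. Otherwise $w$ has an adjacent reducible pair at some positions $i,i+1$; by Definition \ref{irrdef} this means either $w_i,w_{i+1}\in G$, or $w_i=w_{i+1}^{-1}\in X\amalg X^{-1}$, and in both cases the product $w_iw_{i+1}$ lies in the alphabet $S$. I perform the corresponding reduction on $w$ (merging the two letters into their product $w_iw_{i+1}$, respectively deleting the cancelling pair), obtaining a shorter $w^-$ with $(w^-)'=g$, and I construct a companion $u^-$ with $(u^-)'=1$, $|u^-|=|w^-|$ and $\rho(w^-,u^-)\le\rho(w,u)$; the induction hypothesis applied to $(w^-,u^-)$ then completes the step. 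The \emph{aligned} instances are those in which the two letters $u_i,u_{i+1}$ sitting under the reduced pair also combine inside $S$. If $u_iu_{i+1}\in S$ one simply mirrors the move on $u$, and the new cost is controlled by the submultiplicativity of $d$ on $S$ --- the inequality $d(w_iw_{i+1},u_iu_{i+1})\le d(w_i,u_i)+d(w_{i+1},u_{i+1})$, which for $G$-letters is exactly the two-sided invariance of $d_G$ (cf.\ Fact \ref{fact1}) and for the mixed letters follows from the amalgamated definition of $d$ --- together with the fact that $a\mapsto a^{-1}$ is a $d$-isometry on $S$ and the triangle inequality.

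The main obstacle is the \emph{entangled} case, in which $w$ must be reduced at $i,i+1$ but $u_iu_{i+1}\notin S$ (for instance two non-cancelling generators, or a nontrivial element of $G$ adjacent to a generator). A short example shows that this configuration genuinely occurs and that no purely local deletion of the columns $i,i+1$ from $u$ can simultaneously preserve $u'=1$ and keep $\rho$ from increasing: the product $u_iu_{i+1}$ is a ``length-two'' object that cannot be stored in a single position, and one cannot in general avoid the entangled site by choosing a different reducible pair of $w$. This is precisely where Lemma \ref{lem1} is used. Since $u'=1\in G$, Lemma \ref{lem1} furnishes a match $\theta$ on $u$ (Definition \ref{definematch}) pairing up the $X\amalg X^{-1}$-letters of $u$ so that each matched block has trivial product. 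I would use $\theta$ to reroute the modification of $u$ \emph{non-locally}: instead of acting on the columns directly beneath the reduced pair, I contract $u$ along a matched block straddling positions $i,i+1$, replacing the block by the single letter supplied by the relevant match partner.

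The delicate points, which I expect to be the technical heart of the proof, are twofold. First, one must check that the contracted word $\bar u$ still represents $1$; this is exactly guaranteed by the match condition $\prod_{i\le j\le\theta(i)}u'_j=1$, so that removing a matched block does not disturb the product. Second, one must bound the cost of the newly introduced letter by the total original cost over the contracted block: here I would run a telescoping chain of triangle inequalities along the block, absorbing the intermediate $G$-letters via the invariance of $d_G$ and flipping sides by the inverse-isometry of $d$, organized as an inner induction on the nesting depth of $\theta$ (mirroring the recursive structure already used in the proof of Lemma \ref{lem1}). Once both the product-preservation and the telescoping estimate are in place, each reduction step strictly decreases $|w|$ without increasing $\rho$, and the outer induction yields the transport statement and hence the lemma. \hfill$\qed$
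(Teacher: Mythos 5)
Your skeleton is exactly the paper's: the easy inequality, a transport statement (reduce $w$ while modifying $u$, keeping lengths equal and $\rho$ non-increasing), an induction on length, and the use of the match from Lemma \ref{lem1} on $u$ in the hard case. The decisive flaw is your ``entangled'' move. Contracting a matched block of $u$ straddling positions $i,i+1$ into a single letter is incompatible with the definition of $\rho$ (and hence of $\delta$), which only compares words of \emph{equal} length: a matched block $u_i\ldots u_{\theta(i)}$ can have arbitrary length $L$, while the reduction of $w$ shortens it by only $1$ or $2$, so after your contraction the two words cannot be compared at all. Nor can this be repaired by padding with units: the $w$-letters that used to face the deleted block would then face $1$, and $\sum_l d(w_l,1)$ is in no way controlled by $\sum_l d(w_l,u_l)$ (those $w$-letters may be close to the $u$-letters you removed but far from $1$), so the cost estimate genuinely fails. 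The paper's move is instead length-preserving: in its Subcases 1b and 2b one substitutes, at the matched positions only, $\tilde u_i=w_{\theta(i)}^{-1}$ and $\tilde u_{\theta(i)}=w_{\theta(i)}$ (respectively aligns $u$ with $w$ at $i,i+1$ and places the inverses at $\theta(i),\theta(i+1)$); the match condition $\prod_j u'_j=1$ over the block guarantees $\tilde u'=1$, the triangle inequality together with the fact that $a\mapsto a^{-1}$ is a $d$-isometry gives $\rho(w,\tilde u)\le\rho(w,u)$, and only \emph{after} this are letters erased --- simultaneously, at the same positions, from both words. The principle your plan misses is that $u$ is never shortened on its own: every modification of $u$ is a letter-for-letter substitution, and shortening always happens in lockstep with $w$.

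There is also a secondary gap in your ``aligned'' case. ``Simply mirroring the move'' breaks down when the pair to be cancelled in $w$ is $w_i=w_{i+1}^{-1}\in X\amalg X^{-1}$ while $u_i,u_{i+1}\in G$ with $u_iu_{i+1}\neq 1$ (aligned by your criterion, since $u_iu_{i+1}\in S$): deleting $u_i,u_{i+1}$ destroys $u'=1$ (removing a nontrivial factor from the middle of a product representing $1$ changes the product), while merging them into one $G$-letter leaves $|u^-|=|w^-|+1$. The paper's Subcase 2a resolves this by modifying $w$ rather than $u$: replace $w_i,w_{i+1}$ by $u_i,u_i^{-1}$, which preserves $w'=g$, does not increase $\rho$ (inverse-isometry plus triangle inequality), and converts the configuration into the case of two adjacent $G$-letters, where genuine mirroring is available.
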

\emph{Proof of Lemma \ref{lem2}.} Let $w,u\in W(S)$ be such that $|w|=|u|=n$ and $w'=g$, $u'=1$. Suppose that $w$ is not irreducible. We show that we can then reduce the words $w$ and $u$ to the words $\bar{w},\bar{u}$ such that we still have $|\bar{w}|=|\bar{u}|$, $\bar{w}'=g,\bar{u}'=1$, however, $\rho(\bar{w},\bar{u})\leq \rho(w,u)$.

Let $\theta: P\rightarrow P$ be a match for $u$. Since $w$ is not irreducible, according to Definition \ref{irrdef} there is either $i<n$ such that $w_i,w_{i+1}\in G$ or $w_i,w_{i+1}\in X\amalg X^{-1}$ and $w_i=w_{i+1}^{-1}$. We shall treat these cases separately.
\begin{itemize}
\item {\bf Case 1}. Suppose that $w_i,w_{i+1}\in G$.\\

{\it Subcase \bf 1a}. If $u_i,u_{i+1}\in G$ as well, then we could reduce $w$ and $u$ to $\bar{w},\bar{u}$ so that for every $j<i$, $\bar{v}_j=v_j$, $\bar{v}_i=v_i\cdot v_{i+1}$ and for every $i<j<n$, $\bar{v}_j=v_{j+1}$, where $v$ is either $w$ or $u$. In that case, we have $\rho(\bar{w},\bar{u})\leq \rho(w,u)$ since $d(w_i'\cdot w'_{i+1},u'_i\cdot u'_{i+1})\leq d(w'_i,u'_i)+d(w'_{i+1},u'_{i+1})$ by two-sided invariance of $d$ on $G$.\\

{\it Subcase \bf 1b}. So suppose that either $u_i$ or $u_{i+1}$ belong to $X\amalg X^{-1}$. Let us say that $u_i\in X\amalg X^{-1}$. We shall find $\tilde{u}\in W(S)$ such that $|\tilde{u}|=|u|$, $\tilde{u}'=1$, $\rho(w,\tilde{u})\leq \rho(w,u)$ and $\tilde{u}_i\in G$. Let $j=\theta(i)$ and suppose that $j>i$, the other case is analogous. Then since $\theta$ is a match for $u$, we have $u_j=u_i^{-1}$. Thus we have $d(w_j,u_j)+d(u_i,w_i)\geq d(w_j^{-1},w_i)$. Since we have $\prod_{i<k<j} u_k'=1$ we can modify $u$ to $\tilde{u}$ so that $\tilde{u}_i=w_j^{-1}$ and $\tilde{u}_j=w_j$ and for $k\in \{1,\ldots,n\}\setminus \{i,j\}$ we have $\tilde{u}_k=u_k$. Then $\tilde{u}$ is as required since $\rho(w,u)-\rho(w,\tilde{u})=(d(w_j,u_j)+d(u_i,w_i))-(d(w_j^{-1},w_i)+d(w_j,w_j))\geq 0$. If $\tilde{u}_{i+1}\in G$, then we are in Subcase 1a. Otherwise, apply the procedure above also for $\tilde{u}_{i+1}$. Then we will be in Subcase 1a.\\
\item {\bf Case 2}. Suppose that $w_i,w_{i+1}\in X\amalg X^{-1}$ and $w_{i+1}=w_i^{-1}$.\\

{\it Subcase \bf 2a}. Suppose that either $u_i$ or $u_{i+1}\in G$. Let us say $u_i\in G$. Then we have $d(u_i,w_i)+d(w_{i+1},u_{i+1})\geq d(u_i,u_{i+1}^{-1})=d(u_i^{-1},u_{i+1})$. Then we can replace $w_i$ by $u_i$ and $w_{i+1}$ by $u_i^{-1}$ in $w$ to obtain $\tilde{w}$. Again clearly, $\rho(\tilde{w},u)\leq \rho(w,u)$. Note that both $\tilde{w}_i$ and $\tilde{w}_{i+1}$ are then in $G$. Thus we are in Case 1.\\

{\it Subcase \bf 2b}. Suppose that both $u_i$ and $u_{i+1}$ are in $X\amalg X^{-1}$. Using the match $\theta$ and arguing as in Subcase 1a, we can check that $d(w_i,u_i)+d(u_{\theta(i)},w_{\theta(i)})\geq d(w_i^{-1},w_{\theta(i)})$ and that  $d(w_{i+1},u_{i+1})+d(u_{\theta(i+1)},w_{\theta(i+1)})\geq d(w_{i+1}^{-1},w_{\theta(i+1)})$. It follows that we may modify $u$ to $\tilde{u}$ so that $\tilde{u}_i=w_i$, $\tilde{u}_{i+1}=w_{i+1}$ and $\tilde{u}_{\theta(i)}=w_i^{-1}$, $\tilde{u}_{\theta(i+1)}=w_{i+1}^{-1}$; at other positions, $\tilde{u}$ is equal to $u$. It follows that $\rho(w,\tilde{u})\leq \rho(w,u)$ and we can erase $w_i=\tilde{u}_i=w_{i+1}^{-1}=\tilde{u}_{i+1}^{-1}$ from $w$ and $\tilde{u}$ respectively.
\end{itemize}
\hfill $\qed$ (of Lemma \ref{lem2})\\

We are ready to finish the proof of Theorem \ref{reGraev}, $(1)$. Recall that it remains to prove that $\delta(x,y)>0$ if $x\neq y$, and that $\delta$ extends $d$.\\

For the former, since $\delta$ is two-sided invariant it suffices to check that for any $x\in G\ast F(S)$ such that $x\neq 1$ we have $\delta(x,1)>0$. Let $w\in W(S)$ be the irreducible word such that $w'=x$. Let $n=|w|$. By Lemma \ref{lem2}, we have $\delta(x,1)=\inf\{\rho(w,u):u\in W(S),|u|=n,u'=1\}$. By assumption $$\varepsilon_0=\min_{\{w_i\in X\amalg X^{-1}\}}\inf\{d(g,w_i):g\in G\}>0$$
Let $u\in W(S)$ be arbitrary such that $|u|=n$ and $u'=1$. If there exists $i\leq n$ such that $w_i\in X\amalg X^{-1}\wedge u_i\in G$, then $\rho(w,u)\geq \varepsilon_0>0$. Suppose there exists $i\leq n$ such that $w_i\in G\wedge u_i\in X\amalg X^{-1}$. Let $\theta:P\rightarrow P$ be a match for $u$, where again $P=\{i\leq n:u_i\in X\amalg X^{-1}\}$. Then we could replace $u$ by $u^*$ such that $u^*_j=u_j$ for $j\in \{1,\ldots,n\}\setminus \{i,\theta(i)\}$, and $u^*_i=w_i$ and $u^*_{\theta(i)}=w_i^{-1}$. Indeed, since $d(w_{\theta(i)},w_i^{-1})\leq d(w_i,u_i)+d(u_i^{-1},w_{\theta(i)})$, it follows that $\rho(w,u^*)\leq \rho(w,u)$.

Consequently, we may suppose that for every $i\leq n$ we have $w_i\in G$ iff $u_i\in G$. Indeed, if for some $i\leq n$ we have $w_i\in X\amalg X^{-1}$ and $u_i\in G$, then we argued above that then we have $\rho(w,u)\geq \varepsilon_0>0$. If, on the other hand, for some $i\leq n$ we have $w_i\in G$ and $u_i\in X\amalg X^{-1}$, then we argued above that we can replace $u$ by $u^*$ such that $(u^*)'=1$, $|u^*|=|u|$, $u^*_i\in G$ and  $\rho(w,u^*)\leq \rho(w,u)$.

If for every $i\leq n$ we have $w_i\in G$, then since $w$ is irreducible we have $w=w_1$, $u=u_1=1$ and clearly, $\rho(w,u)=d(w,1)>1$. Thus we suppose that $P\neq \emptyset$. Let $$\varepsilon_1=\min \{d(w_i,w_j^{-1}):i,j\in P,w_i\neq w_j^{-1}\}$$ and let $$\varepsilon_2=\min\{d(w_j,1):w_j\in G\}$$ If there exists $i\in P$ such that $w_i\neq u_i$ then we have $\rho(w,u)\geq d(w_i,u_i)+d(w_{\theta(i)},u_{\theta(i)})=d(w_i,u_i)+d(w_{\theta(i)},u_i^{-1})\geq d(w_i,w_{\theta(i)}^{-1})\geq \varepsilon_1>0$, since $u_i=u_{\theta(i)}^{-1}$.

Otherwise, for every $i\in P$ we have that $w_i=u_i$. However, since $w'\neq 1$, there exists $i\in P$ such that $\theta(i)>i$ and for every $i<j<\theta(i)$ we have $j\notin P$. We claim that either $\theta(i)=i+1$ or $\theta(i)=i+2$. Indeed, if $\theta(i)>i+1$, then $w_{i+1}\in G$, and since $w$ is irreducible we must have $w_{i+2}\in X\amalg X^{-1}$, so $i+2\in P$ and the claim follows. If the first case holds, i.e. $\theta(i)=i+1$, we have $\rho(w,u)\geq d(w_i,u_i)+d(w_{i+1},u_i^{-1})\geq d(w_i,w_{i+1}^{-1})\geq \varepsilon_1>0$. If the other case holds, i.e. $\theta(i)=i+2$, then by the definition of match we must have $u_{i+1}=1$, and thus $\rho(w,u)\geq d(w_{i+1},1)\geq \varepsilon_2>0$.\\

For the latter, let $x,y\in S$. We need to check that $\delta(x,y)=d(x,y)$. Clearly, $\delta(x,y)\leq d(x,y)$. By two-sided invariance of $\delta$ and Lemma \ref{lem2} we have $\delta(x,y)=\delta(x\cdot y^{-1},1)=\inf\{d(x,z)+d(y^{-1},z^{-1}):z\in S\}$. However, the infimum is attained for $z=x$ or $z=y$ since $d(x,z)+d(y^{-1},z^{-1})=d(x,z)+d(z,y)\geq d(x,y)$, and we are done.\\

It remains to prove the item $(2)$ of Theorem \ref{reGraev}. First of all, we consider the greatest metric $d''$ on $G\amalg X$ that extends $d'$ on $A_G\amalg X$ and $d_G$ on $G$. More precisely, $d''$ is the amalgam metric of $d'$ on $A_G\amalg X$ and $d_G$ on $G$ over $A_G$, i.e. for any $x\in X$ and $g\in G$, we have $d''(x,g)=\inf\{d'(x,g_0)+d_G(g_0,g):g_0\in A_G\}$. Observe that the infimum is in fact attained since $A_G$ is finite. Thus in particular, if $d'$ is rational, $d''$ will be rational as well. Next, we extend $d''$ to $\delta$ as in the item $(1)$. We need to verify that $\delta$ is in that case finitely generated, and if $d'$ is rational, then $\delta$ is also still rational.

Let $d$ be the extension of $d''$ to $G\amalg X\amalg X^{-1}$ as in the proof of the previous item. Define a metric $\gamma$ on $G\ast F(X)$ as follows: for any $x,y\in G\ast F(X)$ we set $$\gamma(x,y)=\inf \{d(x_1,y_1)+\ldots+d(x_n,y_n):n\in \Nat,x_1,y_1,\ldots,x_n,$$ $$y_n\in A_G\amalg X\amalg X^{-1},x=x_1\cdot\ldots\cdot x_n,y=y_1\cdot\ldots\cdot y_n\}$$ It follows from the definition that $\gamma$ is a two-sided invariant metric which is finitely generated by the values on pairs from $A_G\amalg X\amalg X^{-1}$. Moreover, the minimum in the previous definition is actually attained since $A_G\amalg X\amalg X^{-1}$ is finite. Thus, if $d'$ is rational, then $\gamma$ is also rational. Comparing the definitions of $\gamma$ and $\delta$ in this particular case, one can see that they are equal. Indeed, for any $x,y\in G\ast F(X)$ we have $$\delta(x,y)=\inf \{\rho(w_x,w_y):w_x,w_y\in W(S),w'_x=x,w'_y=y,|w_x|=|w_y|\}=$$ $$\inf\{d(x_1,y_1)+\ldots+d(x_m,y_m):x_1,y_1,\ldots,x_m,y_m\in G\amalg X\amalg X^{-1},$$ $$x=x_1\cdot\ldots\cdot x_m,y=y_1\cdot\ldots\cdot y_m\}$$ Observe that in the previous equivalent definition of $\delta$, the elements\\ $x_1,y_1,\ldots, x_m,y_m$ are allowed to be from $G\amalg X\amalg X^{-1}$, while in the definition of $\gamma$ they have to be from $A_G\amalg X\amalg X^{-1}$. Thus, $\delta(x,y)\leq \gamma(x,y)$. However, for every $f,g\in G$ we have $d(f,g)=d(f_1,g_1)+\ldots+d(f_j,g_j)$ for some $f_1,g_1,\ldots,f_j,g_j\in A_G$ since $d\upharpoonright G=d_G$ is finitely generated by $A_G$. Similarly, for every $z\in X\amalg X^{-1}$ and $g\in G$ we have $d(z,g)=d(z,h)+d(h,g)$ for some $h\in A_G$. Thus for some $h_1,g_1,\ldots,h_l,g_l\in A_G$ we have $d(h,g)=d(h_1,g_1)+\ldots+d(h_l,g_l)$, so $d(z,g)=d(z,h)+d(h^{-1},h^{-1})+d(h_1,g_1)+\ldots+d(h_l,g_l)$. So it follows that actually $$\delta(x,y)=\inf\{d(x_1,y_1)+\ldots+d(x_m,y_m):x_1,y_1,\ldots,x_m,y_m\in A_G\amalg X\amalg X^{-1},$$ $$x=x_1\cdot\ldots\cdot x_m,y=y_1\cdot\ldots\cdot y_m\}=\gamma(x,y)$$
\section{Open questions and problems}
\subsection{Groups isometric to the Urysohn space}
To summarize, there are now five known group structures on the Urysohn space \footnote{One should rather talk about classes of group structures since Cameron-Vershik's example is a class of continuum many different monothetic group structures on the Urysohn space}, the groups from papers \cite{CaVe}, \cite{Nie1}, \cite{Nie2} (and \cite{Sh}), \cite{Do} and the present paper. Four of them are known to be different, it is open whether Shkarin/Niemiec's group belongs to the Cameron-Vershik's class. We provide some open questions from this area.

Let us start with the groups of finite exponent. We already mentioned in the introduction that Niemiec in \cite{Nie1} proved that there is an abelian metric group of exponent $2$ isometric to the Urysohn space and that he proved in \cite{Nie2} that there is no abelian metric group of exponent $3$ isometric to the Urysohn space. Moreover, consider the Fra\" iss\' e class of all finite abelian groups of exponent $n$, where $n>3$, equipped with invariant rational metric. He showed (Theorem 5.5 in \cite{Nie2}) that, surprisingly, the corresponding Fra\" iss\' e limit is not isometric to the rational Urysohn space. However, the following problem is still open.
\begin{question}[Niemiec]
Does there exist an abelian metric group of finite exponent other than $2$ and $3$ that is isometric to the Urysohn space?
\end{question}
Since all known metric groups isometric to the Urysohn space have an invariant metric and a countable dense subgroup isometric to the rational Urysohn space, it is probably worthy to work on the following problem.
\begin{problem}
Characterize countable groups that admit a \underline{two-sided} invariant metric with which they are isometric to the rational Urysohn space.
\end{problem}
The reason why we stressed that the metric should be two-sided invariant is because in such a case the group operations are automatically continuous and the operations extend to the metric completion. The following question is thus natural in this context.
\begin{question}
Does there exist a metric group that is isometric to the (rational) Urysohn space such that its metric is not two-sided invariant?
\end{question}
\subsection{Fra\" iss\' e classes of metric groups}
The natural class of all finite abelian groups equipped with invariant rational metric is rather easily checked to be a Fra\" iss\' e class and the metric completion of the corresponding Fra\" iss\' e limit is the universal Polish abelian group from papers \cite{Sh} and \cite{Nie2}. However, the analogous problem for the non-abelian case is open.
\begin{question}\label{non-abelianHall}
Does the class of all finite groups equipped with two-sided invariant rational metric have the amalgamation property?
\end{question}
Let us note that the class of all finite groups does have the amalgamation property (\cite{Neu}) and the Fra\" iss\' e limit is the Hall's universal locally finite group (\cite{Ha}). It is not hard to check that if the class from Question \ref{non-abelianHall} were Fra\" iss\' e, then the Fra\" iss\' e limit would be algebraically isomorphic to the Hall's group. It is not clear though whether it would be isometric to the rational Urysohn space.

\bigskip

\subsection*{Acknowledgements}
The author is very grateful to the referee of this paper for the excellent report where it was pointed out to us the connection between our original construction and the Graev metric and suggested a generalization which is presented in this final form.

The author is also grateful to Wies\l aw Kubi\' s for discussions on this topic.

Part of this work was done during the trimester program ``Universality and Homogeneity" at the Hausdorff Research Institute for
Mathematics in Bonn. The author would therefore like to thank for the support and great working conditions there.

The author was supported by funds allocated to the implementation of the international co-funded project in the years 2014-2018, 3038/7.PR/2014/2, and by the EU grant PCOFUND-GA-2012-600415.

\end{document}